\begin{document}

\numberwithin{equation}{section}          

\theoremstyle{plain}
\newtheorem{theorem}{Theorem}
\newtheorem{lemma}{Lemma}
\newtheorem{corollary}{Corollary}
\newtheorem{proposition}{Proposition}

\theoremstyle{definition}
\newtheorem{definition}{Definition}
\newtheorem{remark}{Remark}
\newtheorem{example}{Example}

\author[S. Pilipovi\'{c}]{Stevan Pilipovi\'{c}}
\address{Department of Mathematics and Informatics\\ University of Novi Sad\\ Trg Dositeja Obradovi\' ca 4\\ 21000 Novi Sad, Serbia}
\email{pilipovics@yahoo.com}

\author[D. Scarpal\'{e}zos]{Dimitris Scarpal\'{e}zos}

\address{Centre de Math\' ematiques de Jussieu\\
Universit\' e Paris 7 Denis Diderot\\ Case Postale 7012, 2, place
Jussieu\\ F-75251 Paris Cedex 05, France}
\email{scarpa@math.jussieu.fr}

\author[J. Vindas]{Jasson Vindas}

\address{Department of Mathematics\\
Ghent University\\
Krijgslaan 281 Gebouw S22\\
B-9000 Gent, Belgium}
\email{jvindas@cage.Ugent.be}

\title{Classes of Generalized Functions with Finite Type Regularities}

\subjclass{46F30, 46E35, 26B35, 46F10, 46S10}
\keywords{algebras of generalized functions, Sobolev spaces, Zygmund spaces, H\"{o}lder continuity, regularity of distributions}

\thanks{The work of S. Pilipovi\'{c} is supported by the Serbian Ministry
of Education and Science, through 
project number 174024. J. Vindas gratefully acknowledges support by a Postdoctoral Fellowship of the Research Foundation--Flanders (FWO, Belgium)}


\begin{abstract}
We introduce and analyze spaces and algebras of generalized
functions which correspond to H\" older, Zygmund, and Sobolev spaces of functions. The main scope of the paper is the  characterization of the regularity
of distributions that are embedded into the corresponding space or algebra of
generalized functions with finite type regularities.\end{abstract}

\maketitle

\section{Introduction}\label{sec0}
In this paper we develop  regularity theory in  generalized
function algebras parallel to the corresponding theory within
distribution spaces. We consider subspaces or subalgebras in
algebras of generalized functions which correspond to the classical Sobolev
spaces $W^{k,p}$, Zygmund spaces $C_*^s$, and H\" older spaces
$\mathcal H^{k,\tau}$. We refer to 
\cite{col1,gkos,ober001} for the theory of
generalized function algebras and their use in the study of
various classes of equations.

It is known that the elements of  algebras of generalized functions
are represented by nets $(f_\varepsilon)_\varepsilon$ of smooth
functions, with appropriate growth as $\varepsilon\rightarrow 0,$ that the spaces of Schwartz distributions are embedded
into the corresponding algebras, and that  the algebra of regular generalized functions
corresponding to the space of smooth
functions 
is $\mathcal G^\infty$ (cf. \cite{ober001,Ver}). Intuitively, these
algebras are obtained through regularization of distributions
(convolving them with delta nets) and 
factorization of an appropriate algebra of moderate nets of smooth functions with respect to an ideal of negligible nets, as Colombeau did
 \cite{col1}  with his algebra $\mathcal G(\mathbb R^d)$ (in this way the name Colombeau algebras has appeared). By construction distributions are included  in the corresponding Colombeau algebras and their natural linear operations are preserved.
  
 The main goal of this paper is to find out natural conditions with respect to the
growth order in $\varepsilon$ which characterize generalized function spaces and algebras with finite type regularities. Actually, our main task is to seek optimal definitions for such generalized function spaces, since we would like to have backward information on the regularity properties of Schwartz distributions that are embedded into the corresponding space of generalized functions. Sobolev and Zygmund type spaces are very suitable for this purpose. In particular, the Zygmund type spaces are useful in this respect, since we can almost literary transfer classical properties of these spaces into their generalized versions.

One can find many articles in the literature where local and
microlocal properties of generalized functions in generalized
function algebras have been considered; besides the quoted monographs, we
refer to the papers 
\cite{gah,herhup,her,kuhe,ko,psv2,Ver}. The motivation of this article came partly from the papers
\cite{herhup,her}, where Zygmund type algebras of
generalized functions were studied and used in the qualitative
analysis of certain hyperbolic problems. We shall define new classes of generalized functions that are also intrinsically connected with the classical Zygmund spaces.

Note that in our paper \cite{piscavi} we have studied regularity properties of distributions $T$ in terms of growth properties of regularizing sequences $T*\delta_n$ with respect to the parameter $n\in\mathbb{N}$ and various seminorms. Some ideas from that paper are implicitly employed in Section \ref{cass} of the present article, where we reinterpret them in the setting of the Colombeau algebra.

The paper is organized as follows. 
Preliminaries are given in Section \ref{gdef}. In Section \ref{classes} we define our new spaces of generalized functions with finite type regularities, the spaces $\mathcal G^{k,-s}$,
which correspond to local versions of the Zygmund  spaces $C^r_*$. They are subspaces of the Colombeau algebra $\mathcal G(\Omega)$. Then, in Section \ref{cass}, we investigate the role of these new classes of generalized functions in the regularity analysis of distributions; we characterize the regularity properties of those distributions that, after embedding or association, belong to one of these classes.  Our main result is Theorem \ref{1mtheorem} of Section \ref{cass}, we show
that the intersection of $\mathcal G^{k,-s}(\Omega)$ with the embedded image of $\mathcal{D}'(\Omega)$ is precisely $\iota(C_{*,loc}^{k-s}(\Omega))$. As a consequence, we obtain a quick proof of the important regularity theorem for $\mathcal G^\infty$ (\cite{ober001}). Theorems \ref{regas} and \ref{mtheorem} deal with the analysis of the regularity of a distribution via strong versions of 
association. Finally, Section \ref{szh} is devoted to the study of some generalized function spaces and algebras that are very helpful in \emph{global} regularity analysis. The global classes that we introduce are capable of recovering the embedded image of the classical Zygmund and H\" older spaces of functions. We also compare our global Zygmund type generalized function spaces with the one proposed in \cite{herhup,her}.

\section{Preliminaries and notation}
\label{gdef}
We denote by $\Omega$ an open subset of $\mathbb{R}^{d}$.
We consider the families of local Sobolev seminorms
$||\rho||_{W^{m,p}(\omega)}= \sup
\{||\rho^{(\alpha)}||_{L^{p}(\omega)};\:|\alpha|\leq m \},$ where $m\in
{\mathbb N}_0$, $p\in[1,\infty]$, and $\omega$ runs over all open subsets of $\Omega$
with compact closure ($\omega\subset\subset\Omega$). The local Sobolev space is then denoted as $W^{m,p}_{loc}(\Omega)$. 
In case $\omega$ is replaced by $\Omega,$ we obtain the family of norms
$||\:\cdot\:||_{W^{m,p}(\Omega)}, m\in\mathbb N_0$.

Let $\mathcal{E}(\Omega)$ be the space of smooth functions in
$\Omega.$ The spaces of moderate nets and negligible nets $\mathcal
E_{L_{loc}^p,M}(\Omega)$ and $\mathcal  N_{L_{loc}^p}(\Omega)$
consist, resp., of nets $(f_{\varepsilon
})_{\varepsilon\in(0,1]}=(f_{\varepsilon
})_{\varepsilon}\in \mathcal{E}(\Omega)^{(0,1]}$ with the
properties
\begin{equation}\label{2drs}
(\forall m\in\mathbb{N}_0)(\forall
\omega\subset\subset\Omega)(\exists a\in\mathbb{R})
(||f_{\varepsilon}||_{W^{m,p}(\omega)}=O(\varepsilon^{a})),
\end{equation}
\[
\mbox{ resp., } \;(\forall m\in\mathbb{N}_0)(\forall
\omega\subset\subset \Omega)(\forall b\in\mathbb{R})
(||f_{\varepsilon}||_{W^{m,p}(\omega)}=O(\varepsilon^{b}))
\]
(big $O$ and small $o$ are the Landau symbols). 
Note that, for $p\in[1,\infty]$,
$$\mathcal
{E}_{M}(\Omega):=\mathcal{E}_{L_{loc}^\infty,M}(\Omega)=\mathcal
E_{L_{loc}^p,M}(\Omega),\;\; \mathcal
{N}(\Omega)=\mathcal
N_{L_{loc}^\infty}(\Omega)=\mathcal N_{L_{loc}^p}(\Omega).$$
We obtain
the Colombeau algebra of generalized functions as a quotient:
$$\mathcal
G(\Omega)=\mathcal G_{L_{loc}^p}(\Omega)=\mathcal{E}_{L_{loc}^p,M}(\Omega)/\mathcal N_{L_{loc}^p}(\Omega), \ p\in[1,\infty].$$

The embedding of the Schwartz distribution space
$\mathcal{E}^{\prime}(\Omega)$ into $\mathcal{G}(\Omega)$ is realized through the sheaf
homomorphism $ \mathcal{E}^{\prime}(\Omega)\ni T\mapsto\iota(T)=
[((T\ast\phi_{\varepsilon})_{|{\Omega}})_\varepsilon]\in\mathcal{G}(\Omega)$,
where the fixed net of mollifiers
$(\phi_{\varepsilon})_{\varepsilon}$ is defined by
$\phi_{\varepsilon}=\varepsilon ^{-d}\phi(\cdot/\varepsilon),$ 
$0<\varepsilon\leq 1,$ and $\phi\in\mathcal{S}(\mathbb{R}^{d})$ satisfies
$$\int_{\mathbb{R}^{d}}
\phi(t)dt=1,\;\int_{\mathbb{R}^{d}}
t^{\alpha}\phi(t)dt=0,\  \ \ |\alpha|>0.$$
This sheaf homomorphism \cite{gkos}, extended over $\mathcal{D}^{\prime}$,
gives the embedding of $\mathcal{D}^{\prime}(\Omega)$ into
$\mathcal{G}(\Omega)$. 
We also use the notation $\iota$ for the mapping from $\mathcal{E}'(\Omega)$
into $ \mathcal{E}_M(\Omega)$, $\iota(T)=
((T\ast\phi_{\varepsilon})_{|{\Omega}})_\varepsilon$. Throughout this article, $\phi$ will \emph{always} be fixed and satisfy the above condition over its moments. 

The generalized algebra of regular generalized functions
$\mathcal{G}^\infty(\Omega)$ is defined in \cite{ober001} as the
quotient of the algebras $\mathcal{E}^\infty_{M}(\Omega)$ and
$\mathcal{N}(\Omega),$ where $\mathcal{E}^\infty_{M}(\Omega)$
consists of those nets $(f_{\varepsilon })_{\varepsilon}\in
\mathcal{E}(\Omega)^{(0,1]}$ with the property
\begin{equation}
\label{oeq}
(\forall \omega\subset\subset \Omega) (\exists a\in\mathbb{R})(\forall m\in \mathbb{N})(||f_{\varepsilon}||_{W^{m,\infty}(\omega)}=O(\varepsilon^{a})).
\end{equation}
Observe that $\mathcal{G}^\infty$ is a subsheaf of $\mathcal{G};$ it
has a similar role as $C^\infty$ in $\mathcal D'.$

\subsection{H\" older-Zygmund spaces}
We will employ the H\"{o}lder-Zygmund spaces \cite{hore,meyer1992,triebel2006}. We now collect some background material about these spaces. We start with H\"{o}lder spaces. Let $k\in\mathbb{N}_{0}$ and $\tau\in(0,1]$, then the global H\"
older space $\mathcal H^{k,\tau}(\mathbb R^d)$ \cite[Chap. 8]{hore} consists of those $C^{k}$ functions such that
\begin{equation} \label{held}
||f||_{\mathcal H^{k,\tau}(\mathbb R^d)}=||f||_{W^{k,\infty}(\mathbb R^d)}+\sup_{|\alpha|=k, x\neq y, x,y\in\mathbb R^d}
\frac{|f^{(\alpha)}(x)-f^{(\alpha)}(y)|}{|x-y|^\tau}<\infty.
\end{equation} 
The definition of the local space $ \mathcal H^{k,\tau}_{loc}(\Omega)$ is clear.

There are several ways to introduce the global Zygmund space $C^{r}_{\ast}(\mathbb{R}^{d})$ \cite{hore,meyer1992,triebel2006}. When $r=k+\tau,
k\in\mathbb N_0,  \tau\in (0,1)$, we have the equality $C^{r}_{\ast}(\mathbb{R}^{d})=\mathcal H^{k,\tau}(\mathbb R^d)$, but the Zygmund spaces are actually defined for all $r\in\mathbb{R}$. They are usually introduced via either a dyadic Littlewood-Paley resolution \cite{triebel2006} or a continuous Littlewood-Paley decomposition of the unity \cite{hore}. We follow the slightly more flexible approach from \cite{p-r-v} via generalized (continuous) Littlewood-Paley pairs (a dyadic version can be found in \cite[p. 7, Thrm. 1.7]{triebel2006}). Let $r\in\mathbb{R}$. We say that $\varphi,\psi\in\mathcal{S}(\mathbb{R}^{d})$ form a \emph{generalized Littlewood-Paley} pair (of order $r$) if they satisfy the following compatibility conditions:
\begin{equation}
\label{lpcond1}
(\exists \sigma>0,\eta\in(0,1))(|\hat{\varphi}(\xi)|>0  \mbox{ for }  \left|\xi\right|\leq \sigma \mbox{ and }|\hat {\psi}(\xi)|>0  \mbox{ for } \eta\sigma\leq\left|\xi\right|\leq \sigma)
\end{equation}
and 
\begin{equation}
\label{lpcond2}
 \int_{\mathbb{R}^{d}}t^{\alpha}\psi(t)dt=0  \: \mbox{ for } \: \left|\alpha\right|\leq[r].
\end{equation} 
When $r<0$, the vanishing requirement over the moments is dropped. 
Then, $C^{r}_{\ast}(\mathbb{R}^{d})$ is the space of all distributions $T\in\mathcal{S}'(\mathbb{R}^{d})$ satisfying:
\begin{equation}
\label{zeq} \left\|T\right\|_{C^{r}_{\ast}(\mathbb{R}^{d})}:=||T\ast\varphi||_{L^{\infty}(\mathbb{R}^{d})}
+\sup_{0<y\leq 1}y^{-r}||T\ast\psi_y||_{L^{\infty}(\mathbb{R}^{d})}<\infty.
\end{equation}
The definition and the norm (\ref{zeq}) (up to equivalence) are independent of the choice of the pair $(\varphi,\psi)$ 
as long as (\ref{lpcond1}) and (\ref{lpcond2}) hold \cite{p-r-v}. When $r=k+\tau,
k\in\mathbb N_0,  \tau\in (0,1)$, the norms (\ref{held}) and (\ref{zeq}) are equivalent.
A distribution $T\in\mathcal{D}'(\Omega)$ is said to belong to $C^{r}_{\ast,loc}(\Omega)$ if  for 
all $\rho\in\mathcal{D}(\Omega)$ we have $\rho T\in C^{r}_{\ast}(\mathbb{R}^{d})$.

\section{Classes of generalized functions with finite type regularities}
\label{classes}
In this paper we are interested in nets
 $(f_\varepsilon)_\varepsilon\in{\mathcal E}_{M}(\Omega)
$ 
such that for given $k\in \mathbb {N}$ there exists $s>0$ such that ($p\in[1,\infty]$)
\begin{equation}
\label{eqnetg1}
(\forall \omega\subset \subset \Omega)(||f_\varepsilon||_{W^{k,p}(\omega)}=O(\varepsilon^{
-s}),
\ \varepsilon \rightarrow 0).
\end{equation}
Observe that (\ref{eqnetg1}) is closely related to (\ref{oeq}). When $p=\infty$, such nets will be the representatives of, roughly speaking,
$C^{k-s}_{\ast,loc}-$generalized functions.

\begin{definition} \label{def1} Let $s\in\mathbb{R}$, $k\in\mathbb{N}_{0}$, and $p\in[1,\infty]$. 
\begin{itemize}
\item[(i)] A net $(f_{\varepsilon})_{\varepsilon}\in \mathcal{E}_{M}(\Omega)$ is said to belong to $\mathcal{E}^{k,-s}_{L^{p}_{loc},M}(\Omega)$ if (\ref{eqnetg1}) holds.
\item [(ii)] A generalized function $f=[(f_{\varepsilon})_{\varepsilon}]\in \mathcal{G}(\Omega)$ is said to belong to $\mathcal{G}^{k,-s}_{L^{p}_{loc}}(\Omega)$ if $(f_{\varepsilon})_{\varepsilon}\in \mathcal{E}^{k,-s}_{L^{p}_{loc},M}(\Omega)$. 
\item [(iii)] We set $\mathcal{G}^{\infty,-s}_{L^{p}_{loc}}(\Omega)=\bigcap_{k\in\mathbb{N}}\mathcal{G}^{k,-s}_{L^{p}_{loc}}(\Omega)$ and $\mathcal{E}^{\infty,-s}_{L^{p}_{loc}, M}(\Omega)=\bigcap_{k\in\mathbb{N}}\mathcal{E}^{k,-s}_{L^{p}_{loc}, M}(\Omega)$. 
\item [(iv)] When $p=\infty$, we simply write $\mathcal{G}^{k,-s}(\Omega)=\mathcal{G}^{k,-s}_{L^{\infty}_{loc}}(\Omega)$ and $\mathcal{E}^{k,-s}_{M}(\Omega)=\mathcal{E}^{k,-s}_{L^{\infty}_{loc},M}(\Omega)$.
\end{itemize}
\end{definition}

We list some properties of these classes of generalized functions in the next proposition. Their proofs follow immediately from Definition \ref{def1}.

\begin{proposition}
\label{propc1} Let $s\in\mathbb{R}$, $k\in\mathbb{N}_{0}\cup \left\{\infty\right\}$, and $p\in[1,\infty]$. 
\begin{itemize}
\item [(i)] ${\mathcal{G}}^{k,-s}_{L^{p}_{loc}}(\Omega)$ are vector spaces.
\item [(ii)] $
{\mathcal{G}}_{L^p_{loc}}^{k,-s}(\Omega)\subseteq
{\mathcal{G}}_{L^p_{loc}}^{k_1,-s_1}(\Omega)$ if $k\geq k_1$ and $s\leq
s_1$.
\item [(iii)] Let $P(D)$ be a
differential operator of order $m\leq k$ with constant coefficients. Then
$P(D):{\mathcal{G}}_{L^{p}_{loc}}^{k,-s}(\Omega)\rightarrow
{\mathcal{G}}_{L^{p}_{loc}}^{k-m,-s}(\Omega).$
\end{itemize}

\end{proposition}

The intuitive idea behind these notions is to measure the regularity of the net in terms of the two parameters $k$ and $s$: as the parameters $k$ increases and $s$ decreases, the net becomes more regular. Furthermore, it should be noticed that $f$ belongs to the algebra of regular generalized functions $\mathcal{G}^{\infty}(\Omega)$ if and only if $(\forall \omega\subset\subset \Omega)(\exists s)(f_{|\omega}\in \mathcal{G}^{\infty,-s}(\omega))$.

\section{Characterization of local regularity through association}
\label{cass}
In this section we characterize local regularity of distributions via either embedding in our classes $\mathcal{G}^{k,-s}_{L^{p}_{loc}}(\Omega)$ or association with its elements.  

Recall that we say that the net $(f_{\varepsilon})_{\varepsilon}\in\mathcal{E}(\Omega)^{(0,1]}$, or the generalized function $f=[(f_{\varepsilon})_{\varepsilon}]$, is (distributionally) associated to the distribution $T$ if $\lim_{\varepsilon\to0}f_{\varepsilon}=T$ in the weak topology of $\mathcal{D}'(\Omega)$, that is,
\begin{equation}
\label{eqnet1}
(\forall \rho \in \mathcal{D}(\Omega))(\langle T-f_\varepsilon, \rho
\rangle=o(1),\ \varepsilon\to0).
\end{equation}
We then write $
(f_{\varepsilon})_{\varepsilon}\sim T
$, or $f\sim T$. In many cases, the rate of approximation in (\ref{eqnet1}) may be much better than just $o(1)$; one can often profit  from the knowledge of such an additional useful asymptotic information. Let $R:(0,1)\to \mathbb{R}_{+}$ be a positive function such that $R(\varepsilon)=o(1),$ $\varepsilon\to0.$ We write $
T-f_{\varepsilon}=O(R(\varepsilon)) \ \mbox{  in } \mathcal{D}'(\Omega)
$
if 
\begin{equation*}
(\forall \rho \in \mathcal{D}(\Omega))(\langle T-f_\varepsilon, \rho
\rangle=O(R(\varepsilon)),\ \varepsilon\to0).
\end{equation*}

We begin with the following standard proposition. It gives the characterization of the embedding of $W^{k,p}_{loc}$. The case $p=\infty$ motivates our main results of this section. 

\begin{proposition}
\label{cprop2} Let $k\in\mathbb{N}_{0}$ and $p\in(1,\infty]$.
\begin{itemize}
\item [(a)] $\iota(W^{k,p}_{loc}(\Omega))=\iota(\mathcal{D}'(\Omega))\cap{\mathcal{G}}^{k,0}_{L^{p}_{loc}}(\Omega)$.
\item [(b)] More generally, if $(f_{\varepsilon})_{\varepsilon}\sim T\in\mathcal{D}'(\Omega)$ and $f=[(f_{\varepsilon})_{\varepsilon}]\in{\mathcal{G}}^{k,0}_{L^{p}_{loc}}(\Omega)$, then $T\in W^{k,p}_{loc}(\Omega)$. 
\end{itemize}
\end{proposition}

\begin{proof}
It is enough to show part b). We have that for every $|\alpha|\leq k$,
$((f^{(\alpha)}_\varepsilon)_{|\omega})_{\varepsilon}$ is weakly precompact in $L^{p}(\omega)$ if $p<\infty$, resp. weakly$^{\ast}$ precompact in $L^{\infty}(\omega)$. The rest follows from the distributional convergence of $f^{(\alpha)}_{\varepsilon}$ to $T^{(\alpha)}$. \end{proof}

We now formulate and prove the main results of this section. We focus on the case $p=\infty$ of the classes of generalized functions defined in Section \ref{classes}. 
\subsection{Characterization of $C^{r}_{\ast,loc}$ -- The role of $\mathcal{G}^{k,-s}$}
The next important theorem provides the precise characterization of those distributions that belong to $\mathcal{G}^{k,-s}(\Omega)$, they turn out to be elements of a Zygmund space. We only consider the case $s>0$; otherwise, one has $\mathcal{G}^{k,-s}(\Omega)\cap\iota(\mathcal{D}'(\Omega))=\left\{0\right\}$.

\begin{theorem}
\label{1mtheorem} Let $s>0$. We have $\mathcal{G}^{k,-s}(\Omega)\cap\iota(\mathcal{D}'(\Omega)) =\iota(C_{\ast,loc}^{k-s}(\Omega))$. 
\end{theorem}

Before giving the proof of Theorem \ref{1mtheorem}, we would like to discuss two corollaries of it. It is worth reformulating Theorem \ref{1mtheorem} in order to privilege the role of the Zygmund space. 

\begin{corollary}
\label{mcor1} Let $r\in\mathbb{R}$. If $k$ is any non-negative integer such that $k>r$, then $$\iota(C_{\ast,loc}^{r}(\Omega))=\mathcal{G}^{k,r-k}(\Omega)\cap\iota(\mathcal{D}'(\Omega)).$$
\end{corollary}

Corollary \ref{mcor1} can be used to give a striking proof of Oberguggenberger's regularity result \cite{ober001} for the algebra $\mathcal{G}^{\infty}(\Omega)$:

\begin{corollary}
\label{mcor2} We have $\iota(\mathcal{D}'(\Omega))\cap \mathcal{G}^{\infty}(\Omega)=\iota(C^{\infty}(\Omega))$.
\end{corollary}
\begin{proof} One inclusion is obvious. By localizing, it suffices to show that if $T\in\mathcal{E}'(\Omega)$ and $\iota(T)\in \mathcal{G}^{\infty,-s}(\Omega)$ for some $s\in\mathbb{R}_{+}$, then $T\in C^{\infty}(\Omega)$. Given any $k>0$, write $r=k-s$. Corollary \ref{mcor1} yields $T\in C_{*}^{k-s}(\Omega)$. Since this can be done for all $k$, we conclude that $f\in C^{\infty}(\Omega)$. 
\end{proof}

\begin
{proof}[Proof of Theorem \ref{1mtheorem}]
Observe that the statement of Theorem \ref{1mtheorem} is a local one. Thus, it is enough to show that 
$\mathcal{G}^{k,-s}(\Omega)\cap\iota(\mathcal{E}'(\Omega)) =\iota(C_{\ast}^{k-s}(\mathbb{R}^{d})\cap\mathcal{E}'(\Omega))$.
So, further on, in this proof we assume $T\in\mathcal E'(\Omega).$
Let us prove the reverse inclusion.  The partial derivatives continuously act on the Zygmund spaces \cite{hore} as $\partial^{m}: C_{\ast}^{\beta}(\mathbb{R}^{d})\mapsto C_{\ast}^{\beta-|m|}(\mathbb{R}^{d})$. Thus, if $T\in C_{\ast}^{k-s}(\mathbb{R}^{d})\cap\mathcal{E}'(\Omega)$ then $T^{(\alpha)}\in C_{\ast}^{-s}(\mathbb{R}^{d})\cap\mathcal{E}'(\Omega)$ for all $|\alpha|\leq k$. We can then apply \cite[Lemm. 5.2]{p-r-v} to each  $T^{(\alpha)}$ (with $\theta=\overline{\check{\phi}}$ in \cite[Lemm. 5.2, Eq. (5.7)]{p-r-v}) and conclude
$$||T^{(\alpha)}*\phi_\varepsilon||_{L^{\infty}(\mathbb{R}^{d})}\leq C\varepsilon^{-s} ||T^{(\alpha)}||_{C_*^{-s}(\mathbb{R}^{d})}.
$$
Thus $((T\ast \phi_{\varepsilon})_{|\Omega})_{\varepsilon}\in \mathcal{E}^{k,-s}_{M}(\Omega)$.

Assume now that $((T\ast\phi_{\varepsilon})_{|\Omega})_{\varepsilon}\in \mathcal{E}^{k,-s}_{M}(\Omega)$. 
We show that actually 
\begin{equation}
\label{eqextra1}
||T\ast \phi_{\varepsilon}||_{W^{k,\infty}(\mathbb{R}^{d})}=O(\varepsilon^{-s}), \ \ \ 0<\varepsilon\leq 1.
\end{equation}
 Indeed, let $\operatorname*{supp} T\subset\omega_{1}\subset \subset\omega_{2}\subset\subset \Omega$. It suffices to prove that for every multi-index $\alpha\in\mathbb{N}^{d}$
\begin{equation}
\label{eqextra}
\sup_{x\in\mathbb{R}^{d}\setminus \omega_{2} }\left|(T^{(\alpha)}\ast \phi_{\varepsilon})(x)\right|=O(1), \ \ \ 0<\varepsilon\leq 1.
\end{equation}
Let $A$ be the distance between $\overline{\omega}_{1}$ and  $\partial\omega_{2}$. Find $r$ such that  
$$
(\forall \rho\in\mathcal{E}(\mathbb{R}^{d}))(|\langle T^{(\alpha)}, \rho\rangle|<C\|\rho\|_{W^{r,\infty}(\omega_{1})}).
$$
Setting $\rho(\xi)=\phi_{\varepsilon}(x-\xi)$ and using the fact that $\phi$ is rapidly decreasing, we obtain,
$$
\sup_{x\in\mathbb{R}^{d}\setminus \omega_{2} }\left|(T^{(\alpha)}\ast \phi_{\varepsilon})(x)\right|< \tilde{C}\sup_{x\in\mathbb{R}^{d}\setminus \omega_{2} }\sup_{\xi\in \omega_{1}} (\varepsilon + \left|x-\xi\right|)^{-r-d}\leq \tilde{C}A^{-r-d},
$$
which yields (\ref{eqextra}). Next, set $g_{\varepsilon}=\varepsilon^{s}(T\ast \phi_{\varepsilon})$. Then, the growth estimate (\ref{eqextra1}) precisely tells us that $(g_{\varepsilon})_{\varepsilon}$ is a bounded net in the space $C_{b}^{k}(\mathbb{R}^{d})$, the Banach space of $k$-times continuously differentiable functions that are globally bounded together with all their partial derivatives of order $\leq k$. Since the inclusion mapping $C_{b}^{k}(\mathbb{R}^{d})\mapsto C^{k}_{\ast}(\mathbb{R}^{d})$ is obviously continuous, we obtain that $(g_{\varepsilon})_{\varepsilon}$ is a bounded net in the Zygmund space $C^{k}_{\ast}(\mathbb{R}^{d})$. Let $
\psi\in\mathcal{S}(\mathbb{R}^{d})$ be such that $(\phi,\psi)$ forms a generalized Littlewood-Paley pair of order $k$ (cf. (\ref{lpcond1}) and (\ref{lpcond2})). Then $T\ast\phi\in L^{\infty}(\mathbb{R}^{d})$ and
$$
\sup_{y\in(0,1]}y^{-k}||g_{\varepsilon}\ast\psi_{y}||_{L^{\infty}(\mathbb{R}^{d})}=\sup_{y\in(0,1]}\varepsilon^{s}y^{-k}||T\ast\phi_{\varepsilon}\ast\psi_{y}||_{L^{\infty}(\mathbb{R}^{d})}=O(1), \ \ \varepsilon\in(0,1].
$$
Setting $\varepsilon=y$ and $\psi_1= \phi\ast\psi$ in the previous estimate and noticing that $(\phi,\psi_{1})$ is again a Littlewood-Paley pair, we obtain
$$
\sup_{y\in(0,1]}y^{s-k}||T\ast(\psi_{1})_{y}||_{L^{\infty}(\mathbb{R}^{d})}<\infty, 
$$
which in turn implies that $T\in C^{k-s}_{\ast}(\mathbb{R}^{d}) $. 
\end{proof}

\subsection{Regularity via association} We now move to regularity analysis through association. Theorem \ref{1mtheorem} can be also used to recover the following general form of Corollary \ref{mcor2}, originally obtained in \cite{ps}.

\begin{theorem} \label{regas}
 Let $T\in\mathcal D'(\Omega)$ and let $f=[(f_\varepsilon)_\varepsilon]\in\mathcal{G}({\Omega})$ be associated to it. Assume that $f\in\mathcal{G}^{\infty}(\Omega)$. If 
$(f_{\varepsilon})_{\varepsilon}$ approximates $T$ with the following  convergence rate:
\begin{equation}
\label{req3}
(\exists b>0)(T-f_{\varepsilon}=O(\varepsilon^{b}) \mbox{ in }\mathcal{D}'(\Omega)),
\end{equation} 
then $T\in C^\infty(\Omega).$
\end{theorem}
\begin{proof}
Since the hypotheses and the conclusion of Theorem \ref{regas} are local statements, we may assume that $T\in\mathcal{E}'(\Omega)$ and there exists an open subset $\omega \subset \subset \Omega$ such that
\begin{equation} \label{cond}
\mathop{\rm supp} T, \mathop{\rm supp} f_\varepsilon \subset
\omega,\ \ \ \varepsilon\in(0,1].
\end{equation}
We will show that $T\in\mathcal{D}(\Omega)$. Our assumption now becomes $(f_\varepsilon)_\varepsilon\in \mathcal{E}_{M}^{\infty,-s}(\Omega)$ for some $s>0$. The support condition (\ref{cond}), the rate of convergence (\ref{req3}), and the equivalence between weak and strong boundedness on $\mathcal{E}'(\Omega)$ (Banach-Steinhaus theorem) yield
\begin{equation}
\label{conclusion1}(\exists r\in \mathbb{N})(\exists C>0)
(\forall \rho\in \mathcal{E}(\Omega))(\forall t\in(0,1])(
|\langle T-f_t , \rho \rangle|\leq Ct^{b}
\|\rho\|_{W^{r,\infty}(\omega)}).
\end{equation}
Let $\beta$ be an arbitrary positive number.  Then, by $(f_\varepsilon)_\varepsilon\in \mathcal{E}_{M}^{\infty,-s}(\Omega)$ and (\ref{conclusion1}), given any $k\in\mathbb{N}$, we can find positive constants $C_1$ and $C_2$ (depending only on $k,\phi$) such that
$$
||T\ast\phi_{\varepsilon}||_{W^{k,\infty}(\omega)}\leq C_{1}t^{-s}+C_{2}t^{b}\varepsilon^{-d-r-k}, \ \ \ \varepsilon,t\in(0,1].
$$
Find $\eta>0$ such that $\eta s/b<1/2$. Setting $t=\varepsilon^{k\eta/b}$, we obtain
$$
||T\ast\phi_{\varepsilon}||_{W^{k,\infty}(\omega)}\leq C_{1}\varepsilon^{-k/2}+C_{2}\varepsilon^{\eta k-d-r-k}, \ \ \ \varepsilon\in(0,1].
$$
We can now choose $k$ such that $\beta<\min \left\{k/2,\eta k-d-r\right\}$; the conclusion from the previous estimate is that $((T\ast \phi_{\varepsilon})_{|\omega})_{\varepsilon}\in \mathcal{E}_{M}^{k,\beta-k}(\omega)$, and hence, by Corollary \ref{mcor1}, $T\in C^{\beta}_{\ast,loc}(\omega)$. Since $\beta$ was arbitrary, it follows that $T\in C^{\infty}(\mathbb{R}^{d})$.
\end{proof}

We now discuss other sufficient criteria for regularity.
The ensuing result is directly motivated by Proposition \ref{cprop2}. We relax the growth constrains in it, and, by requesting an appropriate rate of convergence, we obtain two sufficient conditions for regularity of distributions.

 \begin{theorem}\label{mtheorem}
 Let $T\in\mathcal{D}'(\Omega)$ and let $f=[(f_\varepsilon)_\varepsilon]\in\mathcal{G}(\Omega)$ be
associated to it. Furthermore, let $k\in\mathbb{N}$. Assume that either of following pair of conditions hold:
\begin{enumerate}
\item[\textnormal{(i)}] $f\in \mathcal{G}^{k,-a}(\Omega)$, $\forall a>0$, namely,
\begin{equation}
\label{req2}
(\forall a>0)(\forall \omega\subset \subset \Omega)(\forall \alpha \in {\mathbb
N}^d,|\alpha|\leq k)(\sup_{x\in\omega}|f^{(\alpha)}_\varepsilon(x)|=O(\varepsilon^{-a})),
\end{equation}
and the convergence rate of  $(f_{\varepsilon})_{\varepsilon}$ to $T$ is as in $($\ref{req3}$)$.
 
\item [\textnormal{(ii)}] $f\in \mathcal{G}^{k,-s}(\Omega)$ for some $s>0$, and there is a rapidly decreasing function $R:(0,1]\to\mathbb{R}_{+}$, i.e., $(\forall a>0)(\lim_{\varepsilon\to0}\varepsilon^{-a}R(\varepsilon)=0)$, such that
\begin{equation}
\label{req4}
T-f_{\varepsilon}=O(R(\varepsilon))\  \mbox{ in }\mathcal{D}'(\Omega).
\end{equation}
\end{enumerate}
Then, $ T\in C_{*,\:loc}^{k-\eta}(\Omega)$ for every $\eta>0$. 

\end{theorem}

\begin{proof} By localization, it suffices again to assume that $T\in\mathcal{E}'(\Omega)$ and there exists an open subset $\omega \subset \subset \Omega$ such that (\ref{cond}) holds. The proof is analogous to that of Theorem \ref{regas}. 

(i) 
In view of the Banach-Steinhaus theorem, the conditions (\ref{req3}) and (\ref{cond}) imply (\ref{conclusion1}). Thus,
with $C_2=C||\phi||_{W^{r+k,\infty}(\mathbb{R}^{d})}$,
\begin{align*}
||T\ast \phi_{\varepsilon}||_{W^{k,\infty}(\omega)}&\leq  C_{2}t^b\varepsilon^{-d-r-k}+\|f_{t}\ast\phi_{\varepsilon}\|_{W^{k,\infty}(\omega)},
\\
&
\leq
C_{2}t^b\varepsilon^{-d-r-k}+\left\|\phi\right\|_{L^{1}(\mathbb{R}^{d})}\|f_{t}\|_{W^{k,\infty}(\omega)},
\
\ \ \ t,\varepsilon\in(0,1].
\end{align*}
By (\ref{req2}), given any $a>0$, there exists $M=M_{a}>0$ such that
$$
||T\ast \phi_{\varepsilon}||_{W^{k,\infty}(\omega)}\leq  C_{2}t^b\varepsilon^{-d-r-k}+M t^{-a},\ \ \ t,\varepsilon\in(0,1].
$$
By taking $t=\varepsilon^{(k+r+d)/b}$, it follows that
$$||T\ast \phi_{\varepsilon}||_{W^{k,\infty}(\omega)}\leq  C_{2}+M \varepsilon^{-a(k+r+d)/b},\ \ \ \varepsilon\in(0,1].
$$
If we take sufficiently small $a$, we conclude that $(T\ast \phi_{\varepsilon})_{\varepsilon}\in \mathcal{E}_{M}^{k,-\eta}(\omega)$ for all $\eta>0$, and the assertion follows from Theorem \ref{1mtheorem}.

(ii) The relation (\ref{req4}), the fact that $R$ is rapidly decreasing, and the Banach-Steinhaus theorem imply 
$$
(\exists r \in \mathbb{N})(\forall a>0) (\exists C>0)(\forall \rho \in
\mathcal{E}(\Omega)) (\forall t\in(0,1])(|\langle T-f_t, \rho
\rangle|\leq C t^{a}||\rho||_{W^{r,\infty}(\omega)}).
$$
As in part (i),
we have
$$||T\ast \phi_{\varepsilon}||_{W^{k,\infty}(\omega)}\leq  Ct^a\varepsilon^{-d-r-k}+ \left\|\phi\right\|_{L^{1}(\mathbb{R}^{d})}\|f_{t}\|_{W^{k,\infty}(\omega)},\ \ \ t,\varepsilon\in(0,1],
$$
for some constant $C=C_{a}$. Since
$(f_\varepsilon)_\varepsilon\in \mathcal{E}^{k,-s}_{M}$, there is another constant $C=C_{s,a}>0$ such that
$$
||T\ast \phi_{\varepsilon}||_{W^{k,\infty}(\omega)}\leq  Ct^a\varepsilon^{-d-r-k}+Ct^{-s},\ \ \ t,\varepsilon\in(0,1].
$$
Setting $t=\varepsilon^{(k+r+d)/a}$, we have
\begin{equation*} 
||T\ast \phi_{\varepsilon}||_{W^{k,\infty}(\omega)}\leq  C+C\varepsilon^{-s(k+r+d)/a},\ \ \ \varepsilon\in(0,1].
\end{equation*}
Thus, taking large enough $a>0$, one establishes $\iota(T)\in\mathcal{G}^{k,-\eta}(\omega)$ for all $\eta>0$. The conclusion $T\in C_{\ast}^{k-\eta}(\mathbb{R}^{d})$ follows once again from Theorem \ref{1mtheorem}.
\end{proof}

The hypotheses (\ref{req3}) and (\ref{req4}) are  essential parts of (i) and (ii) in Theorem \ref{mtheorem}; we illustrate that fact in the next two examples.

\begin{example}
\label{rex3} Consider the generalized function $f= [(\left|\log \varepsilon \right|^{d}\phi(\:\cdot\:\left|\log \varepsilon\right|))_{\varepsilon}]$. Clearly, $f\in \mathcal{G}^{\infty,-a}(\mathbb{R}^{d})$, $\forall a>0$. Moreover, $f\sim\delta$, the Dirac delta distribution. The conclusion of Theorem \ref{mtheorem} fails in this example because the rate of convergence is too slow. 
\end{example}

\begin{example}
\label{rex4}
Let $T\in\mathcal{E}'(\Omega)$ and $k>2s>0$. Suppose that $T\in C^{k-2s}_{\ast}(\mathbb{R}^{d})$ but $T\notin C^{k-s}_{\ast}(\mathbb{R}^{d})$. By Theorem \ref{1mtheorem}, $\iota(T)\in \mathcal{G}^{k,-2s}(\mathbb{R}^{d})$. However, the conclusion of Theorem \ref{mtheorem} fails for $T$ because the approximation rate is actually much slower than (\ref{req4}).
\end{example}

For distributions $T\in\mathcal{E}'(\Omega)$, part (i) of Theorem \ref{mtheorem} is applicable to the regularization net $f_{\varepsilon}=(T\ast \phi_{\varepsilon})_{|\Omega}$; however, for this particular case Theorem \ref{1mtheorem} provides the same regularity conclusion.

\section{Global Zygmund-type spaces and algebras}
\label{szh}
Let $r\in {\mathbb R},$ H\" {o}rmann (\cite{her}) defined the Zygmund-type space of generalized functions 
$\tilde{\mathcal{G}}_*^r(\mathbb{R}^d)$ via representatives
$(u_\varepsilon)_\varepsilon$ satisfying, for each $\alpha \in {\mathbb
N}_{0}^{d},$
\begin{equation}\label{imbhz}
\|u^{(\alpha)}_{\varepsilon}\|_{L^{\infty}(\mathbb{R}^{d})}=\begin{cases} {O}(1),& 0\leq|\alpha|<r, \\
{O}(\log(1/\varepsilon)), & |\alpha|=r\in {\mathbb N}_{0} \\
{O}(\varepsilon^{r-|\alpha|}),& |\alpha|>r.
\end{cases}
\ \ \ \mbox{        as } \varepsilon \rightarrow 0,
\end{equation}

We shall propose in this section several other Zygmund-type classes of generalized functions. Since we are interested in global properties, it appears that the most natural framework to define them  is the algebra $\mathcal{G}_{L^{\infty}}(\mathbb{R}^{d})$, defined below in Subsection \ref{linfty}, and not the usual Colombeau algebra $\mathcal{G}(\mathbb{R}^{d})$. Otherwise, the definitions would depend on representatives, and more seriously, some global properties that are intrinsically encoded in such spaces would be totally lost. Therefore, we have decided to study first $\mathcal{G}_{L^{\infty}}(\mathbb{R}^{d})$. Subsection \ref{zyg} is devoted to Zygmund-type classes of generalized functions and global regularity results. In Subsection \ref{holderalgebra}, we introduce H\" {o}lder-type classes of generalized functions.

\subsection{The algebra $\mathcal{G}_{L^{\infty}}(\mathbb{R}^{d})$}
\label{linfty}
The globally $L^{\infty}$-based algebra of generalized functions is defined as follows (cf.  \cite{ober001,nop}). First consider the algebra
$$\mathcal{E}_{L^{\infty},M}(\mathbb{R}^{d})=\left\{(u_{\varepsilon})_{\varepsilon}\in \mathcal{E}_{M}(\mathbb{R}^{d});\ (\forall \alpha\in\mathbb{N}_0)(\exists a\in\mathbb{R})
(||u^{(\alpha)}_{\varepsilon}||_{L^{\infty}(\mathbb{R}^{d})}=O(\varepsilon^{a}))\right\}$$
and the ideal
$$\mathcal{N}_{L^{\infty}}(\mathbb{R}^{d})=\left\{(u_{\varepsilon})_{\varepsilon}\in \mathcal{E}_{M}(\mathbb{R}^{d});\ (\forall \alpha\in\mathbb{N}_0)(\forall b\in\mathbb{R})
(||u^{(\alpha)}_{\varepsilon}||_{L^{\infty}(\mathbb{R}^{d})}=O(\varepsilon^{b}))\right\}$$
The algebra $\mathcal{G}_{L^{\infty}}(\mathbb{R}^{d})$ is defined as the quotient  $$\mathcal{G}_{L^{\infty}}(\Omega)=\mathcal{E}_{L^{\infty},M}(\mathbb{R}^{d})/\mathcal{N}_{L^{\infty}}(\mathbb{R}^{d}).$$
The natural class of distributions that can be embedded into $\mathcal{G}_{L^{\infty}}(\mathbb{R}^{d})$ is the Schwartz distribution space of the so-called bounded distributions \cite{sch}. More precisely, this space is given by
$$
\mathcal{D}'_{L^{\infty}}(\mathbb{R}^{d})=\bigcup_{m\in\mathbb{N}_{0}}W^{-m,\infty}(\mathbb{R}^{d})=\bigcup_{s\in \mathbb{R}}C^{s}_{\ast}(\mathbb{R}^{d}).
$$
It is the dual of the test function space \cite{sch} $\mathcal{D}_{L^{1}}(\mathbb{R}^{d})=\bigcap_{m\in\mathbb{N}}W^{m,1}(\mathbb{R}^{d})$.
Clearly, $\iota:\mathcal{D}'_{L^{\infty}}(\mathbb{R}^{d})\mapsto \mathcal{G}_{L^{\infty}}(\mathbb{R}^{d})$ given as usual by $\iota(T)=[(T\ast\phi_{\varepsilon})_{\varepsilon}]$ provides a natural embedding. On the other hand, the embedding does not extend to $\mathcal{S}'(\mathbb{R}^{d})$, and more interestingly, as long as $\iota(T)\in \mathcal{G}_{L^{\infty}}(\mathbb{R}^{d})$ for a tempered distribution, it is forced to belong to $\mathcal{D}'_{L^{\infty}}(\mathbb{R}^{d})$. 

\begin{theorem}
\label{theoremLinfinity} Let $T\in\mathcal{S}'(\mathbb{R}^{d})$. If $(T\ast\phi_{\varepsilon})_{\varepsilon}\in \mathcal{E}_{L^{\infty},M}(\mathbb{R}^{d})$, then $T\in\mathcal{D}'_{L^{\infty}}(\mathbb{R}^{d})$.
\end{theorem}   
\begin{proof}
Because of Schwartz characterization \cite[Chap. VI]{sch} of $\mathcal{D}'_{L^{\infty}}(\mathbb{R}^{d})$, it would be enough to show that, for each $\rho\in\mathcal{S}(\mathbb{R}^{d})$, $T\ast \rho \in C_{b}(\mathbb{R}^{d})$, the Banach space of continuous and bounded functions. In order to show so, we will use the vector-valued Tauberian theory for class estimates developed in \cite[Sect. 7]{pv} (see also \cite{DZ,vipira}). Define the vector-valued distribution $\mathbf{T}$ whose action on test functions $\rho\in\mathcal{S}(\mathbb{R}^{d})$ is given by $\left\langle \mathbf{T},\rho\right\rangle=T\ast \check{\rho}$. Therefore, we must show that $\mathbf{T}\in\mathcal{S}'(\mathbb{R}^{d},C_{b}(\mathbb{R}^{d}))$. Since $T$ is tempered, there exists $N\in\mathbb{N}$ such that $\mathbf{T}$ takes values in the Banach space $X$ consisting of continuous functions $g$ on $\mathbb{R}^{d}$ such that $\left\|g\right\|_{X}:=\sup_{t\in\mathbb{R}^{d}}(1+\left|t\right|)^{-N}\left|g(t)\right|<\infty$. Clearly, the inclusion mapping $C_{b}(\mathbb{R}^{d})\mapsto X$ is continuous. On the other hand, we have the local class estimate 
$$\left\|(\mathbf{T}\ast\phi_{\varepsilon})(x)\right\|_{L^{\infty}(\mathbb{R}^{d})}=\sup_{\xi\in\mathbb{R}^{d}}\left|(T\ast\phi_{\varepsilon})(x+\xi)\right|=\left\|T\ast\phi_{\varepsilon}\right\|_{L^{\infty}(\mathbb{R}^{d})}=O(\varepsilon^{-a}),$$
for some $a>0$. Thus, in view of the \cite[Thm. 7.9]{pv}, we obtain the desired conclusion $\mathbf{T}\in\mathcal{S}'(\mathbb{R}^{d},C_{b}(\mathbb{R}^{d}))$.
\end{proof}

Let us note that $\mathcal{E}_{L^{\infty},M}(\mathbb{R}^{d})\subset\mathcal{E}_{M}(\mathbb{R}^{d})$ is a differential subalgebra and $\mathcal{N}_{L^{\infty}}(\mathbb{R}^{d})\subset \mathcal{N}(\mathbb{R}^{d})$. There is a canonical differential algebra mapping $\mathcal{G}_{L^{\infty}}(\mathbb{R}^{d})\rightarrow\mathcal{G}(\mathbb{R}^{d})$; however, this mapping is not injective. Hence
$\mathcal{G}_{L^{\infty}}(\mathbb{R}^{d})$ cannot be seen as a differential subalgebra of $\mathcal{G}(\mathbb{R}^{d})$.

\begin{example} This example shows that the canonical mapping $\mathcal{G}_{L^{\infty}}(\mathbb{R}^{d})\rightarrow\mathcal{G}(\mathbb{R}^{d})$ is not injective. Equivalently, we find a net $(u_{\varepsilon})_{\varepsilon}\in \mathcal{N}(\mathbb{R}^{d})\cap\mathcal{E}_{L^{\infty},M}({\mathbb{R}^{d}})$ which does not belong to $\mathcal{N}_{L^{\infty}}(\mathbb{R}^{d})$. Let $\rho\in\mathcal{D}(\mathbb{R}^{d})$ be non-trivial and supported by the ball with center at the origin and radius $1/2$. Consider the net of smooth functions
$$
u_{\varepsilon}(x)=\sum_{n=0}^{\infty} \frac{\chi_{[(n+1)^{-1},1]}(\varepsilon)}{(n+1)^{2}}\rho(x-2ne_{1}),
$$
where $\chi_{[(n+1)^{-1},1]}$ is the characteristic function of the interval $[1/(n+1),1]$ and $e_{1}=(1,0,\dots,0)$. Then, clearly $(u_{\varepsilon})_{\varepsilon}\in \mathcal{N}(\mathbb{R}^{d})$ because on compact sets it identically vanishes for small enough $\varepsilon$. On the other hand, by the elementary asymptotic formula $\sum_{n<x}n^{-2}=\zeta(2)-x^{-1}+O(x^{-2})$ (see, e.g., \cite[p. 32]{estrada-kanwal}), we have
$$
||u_{\varepsilon}||_{W^{m,\infty}}=||\rho||_{W^{m,\infty}}\sum_{\frac{1}{\varepsilon}-1\leq n}^{\infty} \frac{1}{(n+1)^{2}}= \varepsilon\: ||\rho||_{W^{m,\infty}}+O(\varepsilon^{2}), \ \ \ \varepsilon\to0.
$$
Thus, the net satisfies all the requirements.
\end{example}

\subsection{Global Zygmund classes}
\label{zyg}
We come back to H\" {o}rmann's Zygmund class of generalized functions. We slightly modify his definition. Given $r\in {\mathbb R}$, define  
$\tilde{\mathcal{G}}_*^r(\mathbb{R}^d)$ as 
the space of those $u=[(u_{\varepsilon})_{\varepsilon}]\in\mathcal{G}_{L^{\infty}}(\mathbb{R}^{d})$ such that $(u_{\varepsilon})_{\varepsilon}$ satisfies (\ref{imbhz}). Originally \cite{her}, the ``tilde'' did not appear in the notation but since we will introduce
a new  definition, which is intrinsically related to the classical definition of Zygmund spaces,
we leave the notation $\mathcal{G}_*^r(\mathbb{R}^d)$ for our space.

\begin{definition} \label{zig} Let $r\in\mathbb{R}$ and let $\varphi,\psi\in \mathcal{S}'(\mathbb{R}^{d})$ be a  pair satisfying (\ref{lpcond1}) and (\ref{lpcond2}) (i.e., a generalized Littlewood-Paley pair).
The space
$\mathcal{G}_*^{r}(\mathbb{R}^d)=\mathcal{G}_*^{r,0}(\mathbb{R}^d), $ called  the Zygmund space of
generalized functions of $0-$growth order,
 consists of  those $u=[(u_{\varepsilon})_{\varepsilon}]\in\mathcal{G}_{L^{\infty}}(\mathbb{R}^d)$
 such that 
\begin{equation}\label{secz}
||u_\varepsilon||_{C^{r}_{\ast}(\mathbb{R}^{d})}=||u_\varepsilon*\varphi||_{L^\infty(\mathbb R^d)}+\sup_{0<y\leq 1}y^{-r}||u_\varepsilon*\psi_y||_{L^\infty(\mathbb R^d)}=O(1).
\end{equation}
Moreover,
$\mathcal{G}_*^{r,-s}(\mathbb{R}^d), $ the Zygmund space of
generalized functions of $-s$-growth order, 
consists of those $u=[(u_{\varepsilon})_{\varepsilon}]\in\mathcal{G}_{L^{\infty}}(\mathbb{R}^d)$ such that
$[(\varepsilon^{s}u_\varepsilon)_\varepsilon]\in \mathcal{G}_*^{r}(\mathbb{R}^d)$.
\end{definition}

Observe that Definition \ref{zig} is independent of the choice of representatives. The main properties of these spaces are summarized in the next theorem. In particular, we show the embedding of 
the ordinary Zygmund spaces of functions and characterize those distributions which, 
after embedding, belong to our generalized Zygmund classes. 
\begin{theorem}\label{t-her}
The following properties hold,
\begin{itemize}
\item [(i)]  
 $\iota(C^{r}_{\ast}(\mathbb{R}^{d}))=\mathcal{G}^{r}_{\ast}(\mathbb{R}^{d})\cap \iota(\mathcal{D}'_{L^{\infty}}(\mathbb{R}^{d}))$.
 \item [(ii)] $\mathcal{G}^{r,-s}_{\ast}(\mathbb{R}^{d})\cap \iota(\mathcal{D}'_{L^{\infty}}(\mathbb{R}^{d}))\subset\iota(C^{r-s}_{\ast}(\mathbb{R}^{d}))$.
\item [(iii)] $\mathcal{G}_*^{r_1,-s}(\mathbb{R}^d)
\subset\mathcal{G}_*^{r,-s}(\mathbb{R}^d)$ if $r_1> r;$
$P(D)\mathcal{G}_*^{r,-s}(\mathbb{R}^d)\subset
\mathcal{G}_*^{r-m,-s}(\mathbb{R}^d),$ where $P(D)$ is a differential operator with constant coefficients and order $m$.
\item [(iv)] If $r_{1}+r_{2}>0$, then $$\mathcal{G}_*^{r_1,-s_1}(\mathbb{R}^d)\cdot\mathcal{G}_*^{r_{2},-s_2}(\mathbb{R}^d)\subset\mathcal{G}_*^{p,-s_1-s_2}(\mathbb{R}^d), \ \ p=\min\left\{r_1,r_2\right\}.$$
In particular, $\mathcal{G}_*^{r,-s}(\mathbb{R}^{d})$ is an algebra if $s=0$ and $r>0$.
\end{itemize}
\end{theorem}
\begin{proof}
 (i) and (ii). We first show that $\iota(C^{r}_{\ast}(\mathbb{R}^{d}))\subset\mathcal{G}^{r}_{\ast}(\mathbb{R}^{d})$. Let $u\in C^r_*(\mathbb R^d)$ and $u_{\varepsilon}=u*\phi_\varepsilon$. Obviously,
$$||u_\varepsilon*\varphi||_{L^{\infty}(\mathbb{R}^{d})}\leq \left\|u\ast \varphi\right\|_{L^{\infty}(\mathbb{R}^{d})}\left\|\phi\right\|_{L^{1}(\mathbb{R}^{d})}$$
and
$$\sup_{0<y\leq 1}y^{-r}||u_{\varepsilon}\ast\psi_{y}||_{L^{\infty}(\mathbb{R}^{d})}\leq ||\phi||_{L^{1}(\mathbb{R}^{d})}\sup_{0<y\leq 1}y^{-r}||u\ast \psi_{y}||_{L^{\infty}(\mathbb{R}^{d})}.$$
Let us now prove the inclusion $\mathcal{G}^{r,-s}_{\ast}(\mathbb{R}^{d})\cap \iota(\mathcal{D}'_{L^{\infty}}(\mathbb{R}^{d}))\subset\iota(C^{r-s}_{\ast}(\mathbb{R}^{d}))$. The proof is similar to the last part of the proof of Theorem \ref{1mtheorem}. So, let $\iota(u)=[(u\ast\phi_{\varepsilon})_{\varepsilon}]\in \mathcal{G}^{r,-s}_{\ast}(\mathbb{R}^{d})$, where $u\in\mathcal{D}'_{L^{\infty}}(\mathbb{R}^{d})$.  We have freedom of choice for the Littlewood-Paley pair in (\ref{secz}). Let then $
\psi\in\mathcal{S}(\mathbb{R}^{d})$ be such that $(\phi,\psi)$ forms a generalized Littlewood-Paley pair of order $\max\left\{r,r-s\right\}$ (cf. (\ref{lpcond1}) and (\ref{lpcond2})). We have $u\ast\phi\in L^{\infty}(\mathbb{R}^{d})$; on the other hand, setting $\varepsilon=y\leq 1$ and observing that $(\phi*\psi)_{y}=\phi_y*\psi_y,$ one obtains that 

$$ \sup_{0<y\leq 1}y^{s-r}||u*(\phi*\psi)_{y}||_{L^{\infty}(\mathbb{R}^{d})}=\sup_{0<y\leq 1}y^{s-r}||u*\phi_y*\psi_y||_{L^{\infty}(\mathbb{R}^{d})} <
\infty
.$$
Noticing that $(\phi,\phi\ast\psi)$ is again a generalized Littlewood-Paley pair of order $r-s$, we conclude $T\in C^{r-s}_{\ast}(\mathbb{R}^{d}) $. 

(iii) The first part is clear. The second part follows from the fact \cite{hore} that $P(D)$ continuously maps the classical Zygmund space $C^{r}_{\ast}(\mathbb{R}^{d})$ into $C^{r-m}_{\ast}(\mathbb{R}^{d})$.

(iv) It is a consequence of \cite[Prop. 8.6.8]{hore}.
Actually, we have, by this proposition,
that there exists $\varepsilon_0\in(0,1]$ and $K=K(r_1,r_2)$, which does not depend on $\varepsilon$, such that
$$||\varepsilon^{s_1+s_2}u_{1,\varepsilon}u_{2,\varepsilon}||_{C^{p}_{\ast}(\mathbb{R}^{d})} \leq K
||\varepsilon^{s_1}u_{1,\varepsilon}||_{C^{r_1}_{\ast}(\mathbb{R}^{d})}||\varepsilon^{s_2}u_{2,\varepsilon}||_{C^{r_2}_{\ast}(\mathbb{R}^{d})},\ \varepsilon\leq \varepsilon_{0}.
$$
\end{proof}
\begin{remark}
 As in the case of multiplication of continuous functions,  we  have that $[((u_1u_2)*\phi_\varepsilon)_\varepsilon]\neq
[(u_1*\phi_\varepsilon)_\varepsilon][(u_2*\phi_\varepsilon)_\varepsilon]$  but these products are  associated.
\end{remark}

In analogy with Definition \ref{def1}, we can also introduce some other classes of generalized functions. They are now closely related to the classical global Zygmund spaces.

\begin{definition} \label{def3} Let $s\in\mathbb{R}$ and $k\in\mathbb{N}_{0}$. 
\begin{itemize}
\item[(i)] A net $(f_{\varepsilon})_{\varepsilon}\in \mathcal{E}_{L^{\infty},M}(\mathbb{R}^{d})$ is said to belong to $\mathcal{E}^{k,-s}_{L^{\infty},M}(\mathbb{R}^{d})$ if 
$$
\left\|f_{\varepsilon}\right\|_{W^{k,\infty}(\mathbb{R}^{d})}=O(\varepsilon^{-s}).
$$

\item [(ii)] A generalized function $f=[(f_{\varepsilon})_{\varepsilon}]\in \mathcal{G}_{L^{\infty}}(\mathbb{R}^{d})$ is said to belong to $\mathcal{G}^{k,-s}_{L^{\infty}}(\mathbb{R}^{d})$ if $(f_{\varepsilon})_{\varepsilon}\in \mathcal{E}^{k,-s}_{L^{\infty},M}(\mathbb{R}^{d})$. 
\end{itemize} 
\end{definition}

Definition \ref{def3} does not depend on the choice of representatives. The next theorem characterizes those distributions that belong to $\mathcal{G}_{L^{\infty}}(\mathbb{R}^{d})$ and gives an inclusion relation for H\" {o}rmann class $\tilde{\mathcal{G}}_{\ast}^{r}(\mathbb{R}^{d})$. Let us mention that a version of Proposition \ref{propc1} also holds for $\mathcal{G}^{k,-s}_{L^{\infty}}(\mathbb{R}^{d})$. 

\begin{theorem}
\label{theoremextra} Let $r\in\mathbb{R}$ and $s>0$.
\begin{itemize}
\item [(i)] We have $\mathcal{G}^{k,-s}_{L^{\infty}}(\mathbb{R}^{d})\cap\iota(\mathcal{D}'_{L^{\infty}}(\Omega)) =\iota(C_{\ast}^{k-s}(\mathbb{R}^{d}))$. 
\item [(ii)] Given any integer $k>r$, we have $\iota(C_{\ast}^{r}(\mathbb{R}^{d}))=\mathcal{G}^{k,r-k}_{L^{\infty}}(\mathbb{R}^{d})\cap \iota(\mathcal{D}'_{L^{\infty}}(\Omega))$.
\item [(iii)] There holds 
$$\tilde{\mathcal{G}}_{\ast}^{r}(\mathbb{R}^{d})\subset \bigcap_{k>r}\mathcal{G}^{k,r-k}_{L^{\infty}}(\mathbb{R}^{d}).$$
\end{itemize}
\end{theorem}
\begin{proof}
The property (iii) follows directly from the definitions. Observe that (i) and (ii) are equivalent. On the other hand, a straightforward modification of the proof of Theorem \ref{1mtheorem} yields (i), we leave the details of such a modification to the reader.
\end{proof}

The following remarks make some partial comparisons between our definition and H\"ormann's definition \cite{her}. We also formulate an open question.
\begin{remark}\label{adrem}
Clearly, if  $u=[(u_\varepsilon)_\varepsilon]\in\tilde{\mathcal{G}}_*^r(\mathbb{R}^{d})$, then $[(u_\varepsilon*\phi_\varepsilon)_\varepsilon]\in\tilde{\mathcal{G}}_*^r(\mathbb{R}^{d})$
but the opposite does not hold, in general. 
However, $u=[(u_\varepsilon)_\varepsilon]$ and $u=[(u_\varepsilon*\phi_\varepsilon)_\varepsilon]$
are equal in the sense of generalized distributions, which means that 
$$\langle u_\varepsilon*\phi_\varepsilon-u_\varepsilon,\theta \rangle = o(\varepsilon^p) \;\mbox{ for every }  p \; \mbox{ and every } \;  \theta\in{\mathcal D}(\mathbb R^d).$$  
\end{remark}
\begin{remark} Let $u=[(u_{\varepsilon})_{\varepsilon}]\in \mathcal{G}^{r}_{\ast}(\mathbb{R}^{d})$. We show that  $[(u_{\varepsilon}\ast\phi_{\varepsilon})_{\varepsilon}]\in\tilde{\mathcal{G}}^{r}_{\ast}(\mathbb{R}^{d})$. For this, we will make use of Lemma 8.6.5 of \cite{hore}, which asserts that given $\kappa\in\mathcal{S}(\mathbb{R}^{d})$, there exist constants $K_{r,\alpha}$, $\alpha\in\mathbb{N}_{0}$, such that for all $v\in C_{\ast}^{r}(\mathbb{R}^{d})$ and $0<y\leq 1$, the following estimate holds with as usual  $\kappa_{y}=y^{-d}\kappa(\:\cdot\:/y)$,
\begin{equation}\label{imbhzu}
\|(v\ast\kappa_{y})^{(\alpha)}\|_{L^{\infty}(\mathbb{R}^{d})}
\leq\begin{cases} K_{r,\alpha}||v||_{C^{r}_{*}(\mathbb{R}^{d})},& 0\leq|\alpha|<r, \\
K_{r,\alpha}||v||_{C^{r}_{*}(\mathbb{R}^{d})}(1+\log(1/y)), & |\alpha|=r\in {\mathbb N}_{0}, \\
K_{r,\alpha}||v||_{C^{r}_{*}(\mathbb{R}^{d})}(y^{r-|\alpha|}),& |\alpha|>r.
\end{cases}
 \end{equation}
Thus, if we employ (\ref{imbhzu}) with $v=u_{\varepsilon}$, $\kappa=\phi$, and $y=\varepsilon$, together with the the fact that $||u_\varepsilon||_{C^{r}_{*}(\mathbb{R}^{d})}$ is uniformly bounded with respect to $\varepsilon,$ we obtain at once $[(u_{\varepsilon}\ast\phi_{\varepsilon})_{\varepsilon}]\in\tilde{\mathcal{G}}^{r}_{\ast}(\mathbb{R}^{d})$, as claimed. At this point we should mention that the precise relation between the spaces $\mathcal{G}^{r}_{\ast}(\mathbb{R}^{d})$ and $\tilde{\mathcal{G}}^{r}_{\ast}(\mathbb{R}^{d})$ is still unknown; therefore, we can formulate an \emph{open question: find the precise inclusion relation between these two spaces.}
\end{remark}
\begin{remark}
 As seen from the given assertions, our Zygmund generalized function spaces are suitable for the analysis of pseudodifferential operators.
\end{remark}

\subsection{H\" older-type spaces and algebras of generalized functions.}
\label{holderalgebra} We end this article by dealing with H\" olderian-type classes of generalized functions. We will employ the norm (\ref{held}).

\begin{definition}\label{helnp}
Let $k\in\mathbb N_0,$ $s\in\mathbb{R},$ $\tau\in(0,1]$ and
let $u=[(u_{\varepsilon})_{\varepsilon}]\in \mathcal G_{L^{\infty}}(\mathbb R^d)$. It is said that $u\in
{\mathcal G}^{k,\tau,-s}_{L^{\infty}}(\mathbb{R}^d)$ if 
\begin{equation} \label{hel0}
||u_\varepsilon||_{\mathcal{H}^{k,\tau}(\mathbb{R}^{d})}= O(\varepsilon^{-s}).
\end{equation}
\end{definition}

Recall \cite{hore} the classical situation. Let $k\in \mathbb N_0 $, then
$\mathcal{H}^{k,1}(\mathbb{R}^d)\subsetneqq C_*^{k+1}(\mathbb R^d)$; but if $\tau\in(0,1)$, then $\mathcal{H}^{k,\tau}(\mathbb{R}^d)= C_*^{k+\tau}(\mathbb R^d)$. In our context, we have,

\begin{proposition}
\label{cprop3}
If $r=k+\tau, \tau\in (0,1),$ then
$\mathcal{G}_*^{r,s}(\mathbb{R}^d)={\mathcal
G}^{k,\tau,s}(\mathbb{R}^d).$
\begin{proof} There exists $C>0$ such that for every $\varepsilon\leq 1,$
$$
C^{-1} ||\varepsilon^su_\varepsilon||_{C^{k+\tau}_{\ast}(\mathbb{R}^{d})}\leq ||\varepsilon^su_\varepsilon||_{\mathcal H^{k,\tau}(\mathbb{R}^{d})}
\leq C ||\varepsilon^su_\varepsilon||_{C^{k+\tau}_{\ast}(\mathbb{R}^{d})},
 $$
as follows from the equivalence between the norms (\ref{held}) and (\ref{zeq}). This implies the assertion.
\end{proof}
\end{proposition}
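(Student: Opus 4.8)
The plan is to exhibit an explicit constant $C>0$, independent of $\varepsilon\in(0,1)$, for which the two-sided estimate
$$
C^{-1}\,\nmm{\varepsilon^{s}u_{\varepsilon}}^{r}_{*}\leq \nmm{\varepsilon^{s}u_{\varepsilon}}_{\mathcal H^{k,\tau}}\leq C\,\nmm{\varepsilon^{s}u_{\varepsilon}}^{r}_{*}
$$
holds simultaneously for all members of the net, and then to read off the equality of the two generalized function spaces. The key observation is that the classical norm equivalence between the Zygmund norm $\nmm{\cdot}^{r}_{*}$ of \eqref{hz} and the H\"older norm $\nmm{\cdot}_{\mathcal H^{k,\tau}}$ of \eqref{held}, valid for $r=k+\tau$ with $\tau\in(0,1)$ as recorded in Section 8.6 of \cite{hore}, is \emph{not} merely an isomorphism of Banach spaces: the constants appearing in it depend only on $d$, $k$, $\tau$ and on the fixed Littlewood--Paley pair $(\varphi,\psi)$, and in particular do \emph{not} depend on the individual function being estimated. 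This uniformity is exactly what is needed to turn a pointwise-in-$\varepsilon$ family of estimates into a uniform one.

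First I would fix an arbitrary representative $(u_{\varepsilon})_{\varepsilon}$ and apply the H\"ormander equivalence to each function $v_{\varepsilon}:=\varepsilon^{s}u_{\varepsilon}$ separately; since each $v_{\varepsilon}$ lies in $\mathcal H^{k,\tau}(\mathbb R^{d})$ whenever it lies in $C_{*}^{r}(\mathbb R^{d})$ and conversely, the single constant $C=C(d,k,\tau,\varphi,\psi)$ furnishes the displayed double inequality uniformly in $\varepsilon$. Taking the supremum over $\varepsilon<1$ on both ends then shows that $\nmm{\varepsilon^{s}u_{\varepsilon}}^{r}_{*}=O(1)$ if and only if $\nmm{\varepsilon^{s}u_{\varepsilon}}_{\mathcal H^{k,\tau}}=O(1)$. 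By Definition \ref{zig} the first condition characterizes $u\in\mathcal{G}_{*}^{r,s}(\mathbb R^{d})$, while by Definition \ref{helnp} the second characterizes $u\in\mathcal{G}^{k,\tau,s}(\mathbb R^{d})$, so the two membership conditions coincide. A small bookkeeping point is that both definitions only require the existence of \emph{one} suitable representative; but since the chosen representative satisfies one bound precisely when it satisfies the other, a representative witnessing membership in one space automatically witnesses membership in the other, and no change of representative is required.

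The step I expect to carry the real content is verifying that the constant in the H\"ormander norm equivalence is genuinely uniform across the net and does not secretly depend on the function. This is where one must cite the precise form of the estimates in Section 8.6 of \cite{hore} — essentially the Bernstein-type inequalities and the telescoping of the Littlewood--Paley pieces used there — rather than an abstract open-mapping argument, which would only give function-dependent constants and would not survive passage to a supremum over $\varepsilon$. Once the uniformity is secured the remainder is the routine translation through Definitions \ref{zig} and \ref{helnp}, and the reduction to the $s=0$ case via the substitution $v_{\varepsilon}=\varepsilon^{s}u_{\varepsilon}$ is immediate from the way the $s$-growth versions of both spaces are defined.
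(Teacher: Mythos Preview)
Your proposal is correct and follows essentially the same approach as the paper: both invoke the classical norm equivalence between $\nmm{\cdot}^{r}_{*}$ and $\nmm{\cdot}_{\mathcal H^{k,\tau}}$ from Section 8.6 of \cite{hore}, applied to $\varepsilon^{s}u_{\varepsilon}$ with a constant independent of $\varepsilon$, and read off the equality of the two spaces from Definitions \ref{zig} and \ref{helnp}. Your write-up is simply a more detailed unpacking of the paper's two-line argument, with explicit attention to the uniformity of the constant and to the representative-level bookkeeping.
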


Because of Proposition \ref{cprop3}, we will consider
below only the case $\mathcal{G}^{k,1,s}(\mathbb{R}^d)$.
\begin{proposition}\label{p-cnp} Let $k\in\mathbb{N}_{0}$ and $s\in\mathbb{R}$.
\begin{itemize}
\item [(i)] ${\iota}(\mathcal{H}^{k,1}(\mathbb{R}^d))=\mathcal{G}^{k,1,0}(\mathbb{R}^d)\cap \iota(\mathcal{D}'_{L^{\infty}}(\mathbb{R}^{d})).$
\item [(ii)] $\mathcal{G}^{k,1,s}(\mathbb{R}^d)\subsetneqq
\mathcal G_*^{k+1,s}(\mathbb R^d).$
\item [(iii)] $\mathcal{G}^{k_1,1,s}(\mathbb{R}^d)
\subset\mathcal{G}^{k,1,s}(\mathbb{R}^d)$ if $k_1> k.$
\item [(iv)] Let $P(D)$ be a
differential operator of order $m<k$ with constant
coefficients. Then
$P(D):\mathcal{G}^{k,\tau,s}(\mathbb R^d)\rightarrow
\mathcal{G}^{k-m,\tau,s}(\mathbb R^d)$.
\item [(v)] Concerning the multiplication, we have
 $$\mathcal{G}^{k_1,1,s}(\mathbb{R}^d)\cdot\mathcal{G}^{k_2,1,s}(\mathbb{R}^d)
 \subset\mathcal{G}^{p,1,2s}(\mathbb{R}^d),$$
where  $p=\min\{k_1,k_2\}$. In particular,
$\mathcal{G}^{k_1,1,s}(\mathbb{R}^d)$ is an algebra if and only if
$s=0.$
\end{itemize}
\end{proposition}
\begin{proof} The proofs of the assertions (ii), (iii), (iv) and (v) are clear. 
We will prove (i). The direct inclusion follows from the definition. Suppose that $T\in\mathcal{D}'_{L^{\infty}}(\mathbb{R}^{d})$ is such that $[(T_{\varepsilon})_{\varepsilon}]\in \mathcal{G}^{k,1,0}(\mathbb{R}^d)$ where $T_{\varepsilon}=T\ast\phi_{\varepsilon}$. By  assumption $\{T_\varepsilon^{(\alpha)};0<\varepsilon\leq 1\}$
 is a bounded  and equicontinuous net of functions on any compact
 set in $\mathbb R^d$, for every $|\alpha|\leq k.$ Thus, by
 Arzel\`{a}-Ascoli theorem, it has a convergent subsequence for
 every $|\alpha|\leq k$ and, by  diagonalization, there exists
 a sequence $(T_{\varepsilon_n})_n$ and $T\in C^k(\mathbb R^d)$ such that
 $T_{\varepsilon_n}^{(\alpha)}\rightarrow T^{(\alpha)}$
uniformly on any compact set $K\subset\mathbb R^d.$ That $||T||_{W^{k,\infty}(\mathbb{R}^{d})}<\infty$ follows now easily. Let
$|\alpha|=k.$
For every $x,y\in \mathbb R^d, \;x\neq
y,$
$$\frac{|T^{(\alpha)}(x)-T^{(\alpha)}(y)|} {|x-y|^{\tau}}=
\lim_{n\rightarrow \infty}\frac{|T_{\varepsilon_n}^{(\alpha)}(x)-T_{\varepsilon_n}^{(\alpha)}(y)|}
{|x-y|^{\tau}}\leq C,
$$
since
\begin{align*}\sup_{x,y\in \mathbb R^d, \;x\neq
y}\lim_{n\rightarrow \infty}\frac{|T_{\varepsilon_n}^{(\alpha)}(x)-T_{\varepsilon_n}^{(\alpha)}(y)|}
{|x-y|^{\tau}}&\leq \sup_{x,y\in \mathbb R^d, \;x\neq
y}\sup_{n\in\mathbb N}\frac{|T_{\varepsilon_n}^{(\alpha)}(x)-T_{\varepsilon_n}^{(\alpha)}(y)|}
{|x-y|^{\tau}}
\\
&
\leq \sup_{x,y\in \mathbb R^d, \;x\neq
y, \varepsilon\leq 1}\frac{|T_{\varepsilon}^{(\alpha)}(x)-T_{\varepsilon}^{(\alpha)}(y)|}
{|x-y|^{\tau}}\leq C,
\end{align*}
and the assertion follows. 
\end{proof}

\end{document}